\newcounter{mynotes}
\declaretheorem[within=section]{theorem}
\declaretheorem[sibling=theorem]{lemma}
\declaretheorem[sibling=theorem]{Lemma+Definition}
\crefname{conjecture}{Conjecture}{Conjectures}
\crefname{claim}{Claim}{Claims}
\crefname{remark}{Remark}{Remarks}
\crefname{Lemma+Definition}{Lemma+Definition}{Lemma+Definition}
\newcounter{termcounter}
\renewcommand{\thetermcounter}{\Alph{termcounter}}
\crefname{term}{term}{terms}
\def\term{\@ifnextchar[\term@optarg\term@noarg}%]
\def\term@optarg[#1]#2{%
  \textup{(#1)}%
  \def\@currentlabel{#1}%
  \def\cref@currentlabel{[][2147483647][]#1}%
  \cref@label[term]{#2}}
\def\term@noarg#1{%
  \refstepcounter{termcounter}%
  \textup{(\thetermcounter)}%
  \cref@label[term]{#1}}
\newcommand{\ignore}[1]{}
\definecolor{DSred}{rgb}{1,0,0}
\renewcommand{\leq}{\leqslant}
\renewcommand{\geq}{\geqslant}
\renewcommand{\epsilon}{\varepsilon}
\title{A bipartite graph with non-unimodal independent set sequence}
\author{Arnab Bhattacharyya\thanks{DIMACS \& Rutgers
    University. Email: \texttt{arnabb@dimacs.rutgers.edu}.}
 \and Jeff Kahn\thanks{Rutgers University. Email: \texttt{jkahn@math.rutgers.edu}}
}
\begin{document}
\maketitle

\begin{abstract}
We show that the independent set sequence of a bipartite graph need not
be unimodal.
\end{abstract}

\section{Introduction}

For a graph $G = (V,E)$ and an integer $t \geq 0$, let $i_t(G)$ denote
the number of independent sets of size $t$ in $G$. 
(Recall that an independent set is a set of vertices spanning no edges.)
The {\em independent set sequence} of $G$ is the sequence
$i(G)=(i_t(G))_{t=0}^{\alpha(G)}$, where $\alpha(G)$ is the size of a largest
independent set in $G$. 

It was conjectured by Levit and Mandrescu \cite{Levit} that for any
bipartite graph $G$, $i(G)$ is unimodal; that is,
that there is a $k$ for which
$$i_0(G) \leq i_1(G) \leq \cdots \leq i_k(G) \geq i_{k+1}(G) \geq
\cdots \geq i_{\alpha(G)}(G).
$$
Evidence in favor of this was given by Levit and Mandrescu
\cite{Levit} and by Galvin (in \cite{Galvin}, which got us interested in the problem).

In this note, we disprove the conjecture:
\begin{theorem}\label{main}
There are bipartite graphs $G$ for which $i(G)$ is not unimodal.
\end{theorem}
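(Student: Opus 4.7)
The plan is to exhibit an explicit bipartite graph $G$ (in fact a family $G_n$ parameterized by an integer $n$) for which the independence sequence $i(G_n)$ has a strict local minimum between two local maxima. Throughout we work with the independence polynomial $I_H(x) := \sum_{t \geq 0} i_t(H)\, x^t$; since disjoint union of graphs corresponds to multiplication of independence polynomials, constructing $G_n$ as a disjoint union of tractable bipartite ``gadgets'' reduces the counting problem to multiplying explicit polynomials and extracting coefficients.

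The construction will be a disjoint union of a few bipartite gadgets with sizes and multiplicities depending on $n$. Natural building blocks include complete bipartite graphs $K_{a,b}$ (for which $I_{K_{a,b}}(x) = (1+x)^a + (1+x)^b - 1$), stars $K_{1,n}$ (for which $I_{K_{1,n}}(x) = (1+x)^n + x$), matchings, and isolated vertices. Once a specific candidate $G_n$ is chosen, one expands $I_{G_n}(x)$ as a product and reads off $i_t(G_n)$ as a finite alternating-sign sum of binomial coefficients; the proof is then completed by locating three indices $t_1 < t_2 < t_3$ (as functions of $n$) and using Stirling-type or direct ratio estimates on these binomial sums to verify the strict inequalities $i_{t_1}(G_n) > i_{t_2}(G_n)$ and $i_{t_3}(G_n) > i_{t_2}(G_n)$ for all sufficiently large $n$.

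The main obstacle is the very first step: choosing the gadgets. By Hoggar's theorem, the convolution of two positive log-concave sequences with no internal zeros is again log-concave, and in particular unimodal; consequently, a disjoint union of bipartite graphs whose individual independence polynomials are log-concave automatically has a unimodal independence sequence. Many standard bipartite graphs --- complete bipartite graphs, paths, cycles, matchings --- turn out to have log-concave independence polynomials, so any successful construction must employ at least one bipartite gadget whose independence polynomial is genuinely not log-concave, and the multiplicities of the remaining components must be tuned so that the resulting local ``dip'' survives (rather than being smoothed out) under convolution with the other factors. Identifying such a gadget and the right multiplicities is the technical core of the proof; the verification, once the construction is in hand, amounts to relatively routine asymptotics on binomial sums.
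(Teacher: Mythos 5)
There is a genuine gap: your proposal never actually produces a graph. You set up a framework (disjoint unions, multiplicativity of the independence polynomial, the Hoggar log-concavity obstruction), correctly observe that every standard gadget you list is log-concave and therefore useless on its own, and then state that ``identifying such a gadget and the right multiplicities is the technical core of the proof'' --- which is precisely the step you leave open. Since the theorem is an existence statement, exhibiting the counterexample \emph{is} the proof; a plan that defers the construction proves nothing. Worse, the framework itself points in an unpromising direction: the obstruction you cite shows that you need a non-log-concave bipartite component to begin with, so the disjoint-union/convolution machinery cannot manufacture non-unimodality out of simple pieces --- it only preserves or destroys a dip that some single connected gadget must already supply. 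Finding that connected gadget is the whole problem, and nothing in your writeup addresses it.

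For comparison, the paper's construction is a single connected bipartite graph: take $V_1\cup V_2\cup V_3$ with $|V_1|=b-a$, $|V_2|=|V_3|=a$, a complete bipartite graph between $V_1$ and $V_2$, and a perfect matching between $V_2$ and $V_3$. The decomposition used is additive rather than multiplicative: every independent set lies in $V_1\cup V_3$ or in $V_2\cup V_3$, giving $i_t(G)=(2^t-1)\binom{a}{t}+\binom{b}{t}$ by inclusion--exclusion. The essential trick is the factor $2^t$ coming from the matching, which moves the peak of the $\binom{a}{t}$ term to $t\approx 2a/3$ and raises its height to order $3^a/\sqrt a$; choosing $b=\lfloor a\log_2 3\rfloor$ makes the $\binom{b}{t}$ peak (at $t\approx b/2$) comparable in height but separated by $\Theta(a)\gg\sqrt a$, forcing a dip in between. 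Your asymptotic verification plan (ratio/Stirling estimates on binomial sums at three chosen indices) would indeed finish the argument once such a formula is in hand, but without the construction the proof does not exist.
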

\noindent
 See \cite{Stanley} for a general survey of unimodality and the
 stronger notion of log-concavity.

\section{Counterexample}

Given positive integers $a$ and $b>a$, let
$G=G(a,b) = (V,E)$ with:
$V = V_1 \cup V_2\cup V_3$, where $V_1,V_2,V_3$ are disjoint, 
$|V_1|=b-a$ and $|V_2|=|V_3|=a$; and
$E$ consists of a complete bipartite
graph between $V_1$ and $V_2$ and a perfect matching between $V_2$ and
$V_3$. 
\begin{lemma}
For every $t  \geq0$, $i_t(G) = (2^t - 1) {a \choose t} + {b \choose t}.$
\end{lemma}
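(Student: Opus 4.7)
The plan is to partition independent sets $I$ of size $t$ according to whether $I$ meets $V_2$, and count each class directly using the structure of $G$.

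First I would observe that $V_1 \cup V_3$ is an independent set: by construction, edges of $G$ run only between $V_1$ and $V_2$ or between $V_2$ and $V_3$, so there are no edges inside $V_1 \cup V_3$. Consequently, if $I \cap V_2 = \emptyset$, then $I$ can be any $t$-subset of $V_1 \cup V_3$, contributing $\binom{|V_1| + |V_3|}{t} = \binom{b}{t}$ to $i_t(G)$.

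Next I would handle the case $I \cap V_2 \neq \emptyset$. The complete bipartite graph between $V_1$ and $V_2$ forces $I \cap V_1 = \emptyset$ as soon as $I$ contains a single vertex of $V_2$. The perfect matching between $V_2$ and $V_3$ then forbids $I$ from containing the matched partner of any vertex in $I \cap V_2$. Thus if $|I \cap V_2| = s \geq 1$, one chooses $s$ vertices of $V_2$ in $\binom{a}{s}$ ways, and then $t - s$ vertices from the $a - s$ unmatched vertices of $V_3$ in $\binom{a-s}{t-s}$ ways.

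Summing gives
\[
i_t(G) = \binom{b}{t} + \sum_{s=1}^{t} \binom{a}{s}\binom{a-s}{t-s}.
\]
The last step is the identity $\binom{a}{s}\binom{a-s}{t-s} = \binom{a}{t}\binom{t}{s}$ (both sides count ordered pairs $(S, T)$ with $S \subseteq T \subseteq [a]$, $|S|=s$, $|T|=t$), which yields $\sum_{s=1}^{t} \binom{a}{s}\binom{a-s}{t-s} = \binom{a}{t}(2^t - 1)$, giving the claimed formula. There is no real obstacle here; the only thing to be careful about is the disjointness of the two cases and the boundary conventions (the $s = 0$ term of the second sum is precisely $\binom{b}{t}$ if one wished to merge them, but keeping them separate is cleanest).
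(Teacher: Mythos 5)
Your proof is correct, and at the structural level it rests on the same observation as the paper's: every independent set of $G$ avoids $V_1$ or avoids $V_2$, i.e.\ lies in $V_1\cup V_3$ or in $V_2\cup V_3$. The paper resolves the overlap by inclusion--exclusion (the sets contained in both classes are exactly the subsets of $V_3$, giving the $-\binom{a}{t}$), whereas you make the two classes disjoint by conditioning on whether $I$ meets $V_2$; these are interchangeable. The one genuine difference is how the second class is counted: the paper notes that $V_2\cup V_3$ induces a perfect matching on $a$ edges and counts its $t$-element independent sets directly as $2^t\binom{a}{t}$ (choose $t$ edges, then one endpoint of each), while you stratify by $s=|I\cap V_2|$ and then need the subset--of--subset identity $\binom{a}{s}\binom{a-s}{t-s}=\binom{a}{t}\binom{t}{s}$ to collapse the sum to $(2^t-1)\binom{a}{t}$. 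Both are valid; the paper's count of the matching is the slicker of the two and avoids the summation entirely, but your version has the small advantage of never producing overlapping cases that need correcting.
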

\begin{proof}
Each independent set in $G$ is a subset of
either $V_1 \cup V_3$ or $V_2 \cup V_3$. Among independent sets of size $t$,
the number of the first type is ${b \choose t}$,
the number of the second type is $2^t {a \choose t}$, and the number that are of 
both types (that is, that are subsets of $V_3$) is
${a \choose t}$.
\end{proof}

We now assert that $i(G)$ is not unimodal if $a$ is large and (say)
$b=\lfloor a \log_23\rfloor$.
In this case, the expressions 
${b \choose t}$
and $2^t {a \choose t}$ are maximized at $t_1=b/2$ and $t_2=2a/3+O(1)$
respectively (the overlap ${a \choose t}$ is negligible),
with each maximum on the order of $3^a/\sqrt{a}$.
On the other hand, each expression
is $o(3^a/\sqrt{a})$ if $t$ is at least $\omega(\sqrt{a})$ from the maximizing value.
In particular, $i_t(G)$ is much smaller for $t= (t_1+t_2)/2$ than for $t\in\{t_1,t_2\}$,
and so, $i(G)$ is not unimodal.

For a concrete example, we may take $a = 100$ and $b = 159$, for which explicit calculation gives
\begin{align*}
&i_{67}(G) = 49984570869694708771111099844838813533288847750,\\
&i_{74}(G) = 44836126125886591149869334343833780227595935550,\\
&i_{79}(G) = 47256780307562808533825730975714923168070091770.
\end{align*}

\section{Remarks}

The construction above can be generalized to show that  (for bipartite $G$)
$i(G)$ can have
arbitrarily many local maxima.
Given (positive) integers $k$ and $a,a_1,\ldots, a_k$, 
let $G=G(a,a_1,\ldots,a_k)=(A\cup B,E)$ be the bipartite graph where:
$A=\cup_{i=0}^kA_i$ and 
$B=\cup_{j=1}^kB_j$, with all $A_i$'s and $B_j$'s disjoint;
$|A_0| = |B_1| = |B_2| = \cdots =|B_k| = a$ 
and $|A_i| = a_i$ for $i > 0$; 
and $E$ consists of a perfect matching between $A_0$ and $B_j$
for each $j > 0$, 
together with a complete bipartite graph between $A_i$
and $B_j$ for all $(i,j)$ with $j \leq i$.
Then for $a,a_1, \dots, a_k$ large with all the $k+1$ expressions
$2^{a_1+\cdots +a_i}(1+2^{k-i})^a$ roughly equal,
an analysis similar to the one above shows that $i(G)$ has
$k+1$ local maxima.

\medskip
In closing, let us mention  
the very interesting, still unsettled
conjecture of Alavi,  Malde, Schwenk and Erd\H os \cite{Alavi} that trees and
forests have unimodal independent set sequences.

\bibliographystyle{alpha}
\bibliography{papers}

\end{document}